\documentclass{amsart}

\usepackage{amsmath} 
\usepackage{amssymb}

\newcount\skewfactor
\def\mathunderaccent#1#2 {\let\theaccent#1\skewfactor#2
\mathpalette\putaccentunder}
\def\putaccentunder#1#2{\oalign{$#1#2$\crcr\hidewidth
\vbox to.2ex{\hbox{$#1\skew\skewfactor\theaccent{}$}\vss}\hidewidth}}
\def\name{\mathunderaccent\tilde-3 }

\newcommand{\forces}{\Vdash}

\newcommand{\can}{{}^{\omega}2}

\newcommand{\rest}{{\restriction}}

\newcommand{\conc}{{}^\frown\!}
\newcommand{\vtl}{\vartriangleleft} 
\newcommand{\vare}{\varepsilon}

\newcommand{\lex}{{\rm lex}}

\newcommand{\cA}{{\mathcal A}}

\newcommand{\cB}{{\mathcal B}}

\newcommand{\bbC}{{\mathbb C}}

\newcommand{\bbP}{{\mathbb P}}

\newcommand{\mbR}{{\mathbb R}}

\newcommand{\bV}{{\mathbf V}}

\newtheorem{theorem}{Theorem}[section] 
\newtheorem{claim}{Claim}[theorem]
\newtheorem{lemma}[theorem]{Lemma}

\theoremstyle{definition}

\newtheorem{definition}[theorem]{Definition}

\theoremstyle{remark}

\begin{document}

\title{Not so many non-disjoint translations}

\author{Andrzej Ros{\l}anowski}
\author{Vyacheslav V. Rykov} 
\address{Department of Mathematics\\
University of Nebraska at Omaha\\
Omaha, NE 68182-0243, USA}
\email{aroslanowski@unomaha.edu}
\email{vrykov@unomaha.edu}

\subjclass{Primary 03E35; Secondary: 03E15, 54H05}
\date{November 10, 2017}

\begin{abstract}
We show that, consistently, there is a Borel set which has uncountably many  
pairwise very non-disjoint translations, but does not allow a perfect set of
such translations. 
\end{abstract}

\maketitle

\section{Introduction}
There is some interest in the literature in Borel sets admitting many
pairwise disjoint translations. For instance, Balcerzak, Ros{\l}anowski and
Shelah \cite{BRSh:512} studied the $\sigma$--ideal of subsets
of $\can$ generated by Borel sets with a perfect set of pairwise disjoint
translations.  In  this article we are interested in somewhat dual property
of Borel sets: many overlapping translations. 

If $B\subseteq \can$ is an uncountable Borel set, then it includes a perfect
set $P$, and then for $x,y\in P$ we have
\[{\bold 0}, x+y\in (P+x)\cap (P+y).\]
Consequently, every uncountable Borel subset of $\can$ has a perfect set of
pairwise non-disjoint translations. However, if we demand that the
intersections are more substantial, then the problem of many non-disjoint
translations becomes more interesting. One should notice that if
$x+b_0=y+b_1$ then also $x+b_1=x+b_0$, so if $x\neq y$ and $(B+x)\cap (B+y)$
is finite then $|(B+x)\cap (B+y)|$ must be even. 

Here we investigate the first non-trivial case when $(B+x)\cap (B+y)$ has
least 4 elements. We show that it is consistent with ZFC that there is a
$\Sigma^0_2$ subset $B$ of the Cantor space $\can$ such that  
\begin{itemize}
\item for some uncountable set $H\subseteq \can$, $|(B+h)\cap (B+h')|\geq
  4$ for all $h,h'\in H$, but 
\item for every perfect set $P\subseteq \can$ there are $x,x'\in P$ such that 

$|(B+x)\cap (B+x')|\leq 2$.
\end{itemize}
Our proof follows the spirit of the proof of Shelah \cite[Theorem
1.13]{Sh:522}, but since we cut on generality, our arguments are more
straightforward. We fully utilize the algebraic properties of $(\can,+)$, in
particular the fact that all elements of $\can$ are self-inverse. We do not
know if similar arguments can be made for $\mbR$ or even other product
topological groups (like ${}^\omega 4$ with coordinatewise addition modulo
4). 

This line of research will be continued in Ros{\l}anowski and Shelah
\cite{Sh:F1444}, where we will deal with the general case of $\kappa$ many
pairwise non-disjoint translations (getting the full parallel of
\cite[Theorem 1.13]{Sh:522}).  
\bigskip

\noindent {\bf Notation and Terminology}\qquad Our notation is rather
standard and compatible with that of classical textbooks (like Jech
\cite{J}). However, in forcing we keep the convention that {\em a stronger
  condition is the larger one}.  

Ordinal numbers will be denoted be the lower case initial letters of
the Greek alphabet $\alpha,\beta,\gamma,\delta,\vare$ and $\zeta,\xi$.
Natural numbers (finite ordinals) will be called $i,j,k$ and $\ell,n$. 

For a forcing notion $\bbP$, all $\bbP$--names for objects in
  the extension via $\bbP$ will be denoted with a tilde below (e.g.,
  $\name{h}$, $\name{T}$), and $\name{G}_\bbP$ will stand for the
  canonical $\bbP$--name for the generic filter in $\bbP$. 

For two sequences $\eta,\nu$ we write $\nu\vtl\eta$ whenever
  $\nu$ is a proper initial segment of $\eta$, and $\nu \trianglelefteq\eta$
  when either $\nu\vtl\eta$ or $\nu=\eta$.  A {\em tree\/} is a
  $\vtl$--downward closed set of sequences.  

The set of all sequences of length $n$ and with values in ${0,1}$ is
denoted by ${}^n 2$ and we let ${}^{\omega>} 2=\bigcup\limits_{n<\omega}
{}^n 2$. For $\sigma\in {}^{\omega>} 2$ let $[\sigma]=\{x\in\can: \sigma\vtl   
x\}$.  The Cantor space  $\can$ of all infinite 0--1 sequences is equiped
with the topology generated by sets of the form $[s]$ and the coordinatewise
addition $+$ modulo 2. Thus $(\can,+)$ is a topological group. 

For a tree $T\subseteq {}^{\omega>} 2$ we set $[T]=\{x\in\can: (\forall
n<\omega)(x\rest n\in T)\}$.  

For a set $A\subseteq X\times Y$ and $x\in X$ and $y\in Y$ let 
\[A_x=\{z\in Y: (x,z)\in A\}\quad \mbox{ and }\quad A^y=\{z\in X:
(z,y)\in A\}.\] 

\section{Some Technicalities}

\begin{definition}
\label{nice4}
  Let $1<\ell<\omega$. A {\em 4--arrangement\/} in  ${}^\ell 2$
  is a tuple $\langle a,b,c,d\rangle\subseteq {}^\ell 2$ such that
  $a<_\lex b<_\lex c<_\lex d$ and
\[\begin{array}{l}
\min\{k<\ell:a(k)\neq c(k)\}=\min\{k<\ell:b(k)\neq c(k)\}=\\
\min\{k<\ell:a(k)\neq d(k)\} =\min\{k<\ell:b(k)\neq d(k)\}.
\end{array}\]
\end{definition}

\begin{lemma}
\label{get4}
Let $15<\ell<\omega$. Suppose that $h:[{}^\ell 2]^2\longrightarrow 2$ is a
coloring with the property that
\begin{enumerate}
\item[$(\circledast)$] if $a,b,c\in {}^\ell 2$ are distinct, then $h(a,b)=1$
  or $h(a,c)=1$ or $h(b,c)=1$.
\end{enumerate}
(That is, there is no $h$-homogenous triangle in color $0$.) Then there is a
set $\cA\subseteq {}^\ell 2$ such that
\begin{enumerate}
\item$|\cA|\geq 5$,  and $\cA$ contains a 4--arrangement, and  
\item $\cA$ is $h$-homogeneous in color $1$, i.e., $h(a,b)=1$ for distinct
  $a,b\in \cA$.
\end{enumerate}
\end{lemma}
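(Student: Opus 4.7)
The plan is to combine a Ramsey-type consequence of $(\circledast)$ with a pigeonhole argument on the binary tree structure of ${}^\ell 2$.

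First, I would observe that $(\circledast)$ is precisely the statement that the graph on ${}^\ell 2$ whose edges are the pairs $\{a,b\}$ with $h(a,b)=0$ is triangle-free. By the classical Erd\H{o}s--Szekeres upper bound $R(3,k)\leq\binom{k+1}{2}$ one computes
\[R(3,\ell+2)\;\leq\;\frac{(\ell+2)(\ell+3)}{2}\;<\;2^\ell\quad\text{for }\ell>15,\]
and since $|{}^\ell 2|=2^\ell$, this immediately yields an $h$-homogeneous set $\cA\subseteq{}^\ell 2$ in color $1$ with $|\cA|\geq\ell+2\geq 18$. Already this delivers clause (2) of the lemma and the size bound $|\cA|\geq 5$ in clause (1).

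The remaining task is the purely combinatorial claim that \emph{every} $\cA\subseteq{}^\ell 2$ with $|\cA|\geq\ell+2$ automatically contains a 4-arrangement. Unpacking Definition \ref{nice4}, a 4-arrangement at split level $k$ is exactly a choice of a common prefix $s\in{}^k 2$ together with two elements of $\cA$ extending $s\conc\langle 0\rangle$ and two extending $s\conc\langle 1\rangle$; the lexicographic ordering then arranges them as $a<_\lex b<_\lex c<_\lex d$ automatically. I would establish this by induction on $\ell$: split $\cA$ by its first coordinate into $\cA_0$ and $\cA_1$. If both $|\cA_0|\geq 2$ and $|\cA_1|\geq 2$, any two elements from each side form a 4-arrangement at split level $0$. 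Otherwise one side has at most one element, so the other has size at least $\ell+1=(\ell-1)+2$, and (after deleting the leading bit) the inductive hypothesis on ${}^{\ell-1}2$ supplies a 4-arrangement which lifts by prepending that bit. The base case $\ell=2$ is trivial since then $\cA={}^22$ is itself a 4-arrangement.

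Combining the two steps produces exactly the set demanded by (1)+(2). The only arithmetic that needs to be checked is the inequality $R(3,\ell+2)\leq 2^\ell$ used to invoke Ramsey, and the hypothesis $\ell>15$ is a generous margin for this; so there is no real obstacle beyond the bookkeeping of matching the tree-pigeonhole threshold ``$\ell+2$'' against the Ramsey bound.
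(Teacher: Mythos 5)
Your argument is correct, and it takes a genuinely different route from the paper's. The paper proceeds constructively: it observes that each set $Z_a=\{x: x\neq a\ \wedge\ h(x,a)=0\}$ is automatically $1$-homogeneous, and, assuming no $Z_a$ already satisfies (1), it builds five points $a,b,c,d,e$ one at a time by threading through the tree ${}^{\omega>}2$ in blocks of three levels (at depths $\ell-3,\ell-6,\ell-9,\ell-12$); the hypothesis $\ell>15$ is exactly what makes room for four such blocks. Your proof factors the task into two independent pieces: first the Erd\H{o}s--Szekeres bound $R(3,t)\le\binom{t+1}{2}$ yields, from the triangle-free hypothesis $(\circledast)$, a $1$-homogeneous set $\cA$ of size $\ge\ell+2$; second, a clean tree-pigeonhole argument shows that \emph{any} subset of ${}^\ell 2$ of size $\ge\ell+2$ contains a $4$-arrangement, since a binary tree of depth $\ell$ in which no internal node has $\ge 2$ leaves on each side is a caterpillar with at most $\ell+1$ leaves. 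Your unpacking of Definition \ref{nice4} (a common prefix $s$ with two elements of $\cA$ extending $s\conc\langle 0\rangle$ and two extending $s\conc\langle 1\rangle$) is exactly right, the induction on $\ell$ is sound, and the base case $\ell=2$ closes correctly. One thing your route buys: $R(3,\ell+2)\le\binom{\ell+3}{2}\le 2^\ell$ already holds for $\ell\ge 5$, so your argument establishes the lemma under the weaker hypothesis $\ell\ge 5$ and produces a homogeneous set of size $\ell+2$ rather than just five. The paper's version is more self-contained in that it never invokes Ramsey's theorem by name, though its $Z_a$ step is essentially the first move in the standard proof of that Ramsey bound.
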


\begin{proof}
First, for $a\in {}^\ell 2$ let $Z_a=\{x\in {}^\ell 2: x\neq a\
\wedge\ h(x,a)=0\}$. It follows from the assumption $(\circledast)$
that 
\begin{enumerate}
\item[$(*)$] for each $a$, the set $Z_a$ is $h$-homogenous in color $1$. 
\end{enumerate}
If for some $a\in  {}^\ell 2$ the set $Z_a$ satisfies the requirements of
(1), then we are done. So suppose that 
\begin{enumerate}
\item[$(\odot)$] for each $a\in {}^\ell 2$ either $|Z_a|\leq 4$ or $Z_a$
  contains no $4$--arrangement.  
\end{enumerate}
Let $a\in  {}^\ell 2$ be the sequence constantly equal $0$ and let $d\in
{}^\ell 2$ be the $ <_\lex$--last element of $ {}^\ell 2\setminus Z_a$. It
follows from $(\odot)$ that $\{x\in  {}^\ell 2: x\rest (\ell-3)\equiv
1\}\setminus Z_a\neq \emptyset$, and hence $d(k)=1$ for $k<\ell-3$. 

Let $Y=\{\sigma\in  {}^{\ell-3} 2:\sigma(0)=0\ \wedge\ \sigma(1)=1\}$ and
for $\sigma\in Y$ let $X_\sigma=\{x\in  {}^\ell 2:\sigma\vtl x\}$. By
$(\odot)$, $X_\sigma\setminus Z_a\neq\emptyset$ (for each $\sigma\in Y$), so
we may pick $x_\sigma\in X_\sigma$ such that $h(a,x_\sigma)=1$. Again by
$(\odot)$, the set $\{x_\sigma:\sigma\in Y\}$ cannot be contained in $Z_d$,
so we may pick $\sigma^*\in Y$ such that $h(d,x_{\sigma^*})=1$. Set
$b=x_{\sigma^*}$ and note that $h(a,b)=h(b,d)=1$. 

Now we repeat the above procedure ``on $d$'s side'' for both $a$ and $b$ and
$d$. We let $Y'=\{\sigma\in {}^{\ell-3} 2: \sigma(0)=1\ \wedge\
\sigma(1)=0\}$ and $Y''=\{\rho\in {}^{\ell-6} 2: \sigma(0)=1\ \wedge\
\sigma(1)=0\}$ and $Y'''=\{\rho\in {}^{\ell-9} 2: \rho(0)=1\ \wedge\
\rho(1)=0\}$. For $\sigma\in Y'$ consider $X_\sigma=\{x\in {}^\ell
2:\sigma\vtl x\}$ and note that by our assumptions we may pick
$x_\sigma'\in X_\sigma$ such that $h(a,x_\sigma')=1$. Now, for each $\rho\in
Y''$ we may choose $\sigma_\rho\in Y'$ such that $\rho\vtl \sigma_\rho$
and $h(b,x_{\sigma_\rho}')=1$. By our assumptions, for some $\rho^*\in Y''$
we also have $h(d,x_{\sigma_{\rho^*}}')=1$. Set $c=x_{\sigma_{\rho^*}'}$ and
note that $\langle a,b, c,d\rangle$ is a 4--arrangement which is
homogenous in color $1$. 

Repeating the above procedure again, but starting with $Y^+=\{\sigma\in
{}^{\ell-3} 2: \sigma(0)=\sigma(1)=0\ \wedge\ \sigma(2)=1\}$,
going through levels $\ell-3$, $\ell-6$, $\ell-9$ and $\ell-12>3$ and
dealing with $a,b,c,d$ one may find $e\in {}^\ell 2$ such that
$\cA=\{a,b,c,d,e\}$ satisfies the demands (1)+(2). 
\end{proof}

\begin{lemma}
 \label{litlem}
Let $0<\ell<\omega$ and let $\cB\subseteq {}^\ell 2$ be a linearly
independent set of vectors (in $({}^\ell2,+,\cdot)$ over $(2,+_2,\cdot_2)$).  
\begin{enumerate}
\item[(a)] If $a,b,c\in {}^\ell 2$ are pairwise distinct and
  $\{a,b,c\}+\{a,b,c\} \subseteq \cB+\cB$, then for some pairwise distinct
  $\eta,\nu,\rho\in \cB$ we have 
\[a+b=\eta+\nu\quad\mbox{ and }\quad a+c=\eta+\rho.\] 
\item[(b)] If $\cA\subseteq {}^\ell 2$, $|\cA|\geq 5$ and $\cA+\cA\subseteq
  \cB+\cB$, then for a unique $x\in {}^\ell 2$ we have $\cA+x\subseteq \cB$. 
\end{enumerate}
\end{lemma}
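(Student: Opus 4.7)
The plan for part (a) is to exploit that, by linear independence of $\cB$, each nonzero element of its span has a unique representation as a sum of distinct members of $\cB$. Write $a+b=\eta_1+\nu_1$, $a+c=\eta_2+\nu_2$, and $b+c=\eta_3+\nu_3$ with each pair unique (and with $\eta_i\neq\nu_i$ since the sums are nonzero). If $\{\eta_1,\nu_1\}$ and $\{\eta_2,\nu_2\}$ were disjoint, then $(a+b)+(a+c)=\eta_1+\nu_1+\eta_2+\nu_2$ would be a sum of four distinct members of $\cB$, which by uniqueness of representation cannot equal $b+c=\eta_3+\nu_3$ -- contradiction. So the pairs share an element; relabelling, let $\eta$ be that common element and set $\nu:=\nu_1$, $\rho:=\nu_2$. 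Then $\nu\neq\rho$ (else $b=c$) and both differ from $\eta$, giving the required triple.

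For part (b), fix any $a_0\in\cA$ and for each $a\in\cA\setminus\{a_0\}$ let $\cP_a=\{\alpha_a,\beta_a\}\subseteq\cB$ be the unique pair with $a+a_0=\alpha_a+\beta_a$. Part (a) applied to the triple $(a_0,a,a')$ gives $|\cP_a\cap\cP_{a'}|=1$ for distinct $a,a'\in\cA\setminus\{a_0\}$ (nonempty by part (a), and not of size two since $\cP_a=\cP_{a'}$ would force $a=a'$). The heart of the argument is to produce a single $\eta^*\in\cB$ lying in every $\cP_a$; once that is done, $x:=a_0+\eta^*$ satisfies $a_0+x=\eta^*\in\cB$ and, for each $a\neq a_0$, $a+x=\eta^*+\alpha_a+\beta_a$ is the element of $\cP_a$ other than $\eta^*$, hence is in $\cB$.

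To produce $\eta^*$ I would first show that every triple $\cP_{a_1},\cP_{a_2},\cP_{a_3}$ has nonempty common intersection. If not, the three pairwise-intersection singletons are distinct, forcing the configuration $\cP_{a_1}=\{p,q\}$, $\cP_{a_2}=\{p,r\}$, $\cP_{a_3}=\{q,r\}$ for some distinct $p,q,r\in\cB$. For any fourth $a_4\in\cA\setminus\{a_0,a_1,a_2,a_3\}$ (available since $|\cA|\geq 5$), a short case analysis on which element of $\{p,q\}$ lies in $\cP_{a_4}$ shows that the requirement $|\cP_{a_4}\cap\cP_{a_i}|=1$ for $i=1,2,3$ admits no solution with $\cP_{a_4}\notin\{\cP_{a_1},\cP_{a_2},\cP_{a_3}\}$ -- hence $a_4\in\{a_1,a_2,a_3\}$, contradiction. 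With triples now intersecting, the unique element of $\cP_{a_1}\cap\cP_{a_2}$ lies in every $\cP_{a_i}$ by applying the triple property to $\{a_1,a_2,a_i\}$, and this is the desired $\eta^*$.

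For uniqueness, if $\cA+x,\cA+x'\subseteq\cB$ with $x\neq x'$, then for each $a\in\cA$ the pair $\{a+x,a+x'\}$ represents the fixed nonzero element $x+x'$ as a sum of two distinct members of $\cB$. By uniqueness this pair $\{u,v\}$ is independent of $a$, so $\cA\subseteq\{u+x,v+x\}$ -- contradicting $|\cA|\geq 5$. The main obstacle is the combinatorial configuration $\{p,q\},\{p,r\},\{q,r\}$ of three pairwise-intersecting pairs with no common element; the hypothesis $|\cA|\geq 5$ is precisely what supplies the extra element forcing this configuration to collapse, and small examples with $|\cA|=4$ (such as $\cA=\{0,\eta+\nu,\eta+\rho,\nu+\rho\}$, $\cB\supseteq\{\eta,\nu,\rho\}$) show the hypothesis is sharp.
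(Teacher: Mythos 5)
Your proof is correct and is essentially the same as the paper's: part (a) is identical (uniqueness of the $\cB$-representation applied to $b+c=(a+b)+(a+c)$), and in part (b) the paper also fixes a base point $a_0$, picks $x=a_0+\eta$ from the first three elements via (a), and derives a contradiction from the impossibility of a fourth pair $\{\nu^*,\nu^+\}$ meeting each of $\{\eta,\nu\},\{\eta,\rho\},\{\nu,\rho\}$ in exactly one point. Your reorganization of (b) into a Helly-type statement about the pairs $\cP_a$ (pairwise one-point intersections, triples have a common point once a fifth set is available) is a cleaner packaging of the very same combinatorial configuration, not a different route.
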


\begin{proof}
(a)\quad Let $\nu_a,\nu_b,\eta_a,\eta_c,\rho_b,\rho_c\in\cB$ be such that  
\[a+b=\nu_a+\nu_b,\quad a+c=\eta_a+\eta_c,\quad \mbox{ and }\quad
b+c=\rho_a+\rho_c.\]    
Then $\nu_a\neq \nu_b$, $\eta_a\neq \eta_c$, $\rho_b\neq\rho_c$ and 
\[\rho_b+\rho_c=b+c=a+b+a+c=\nu_a+\nu_b+\eta_a+\eta_c.\] 
By the linear independence of $\cB$ we conclude $\{\nu_a,\nu_b\}\cap
\{\eta_a,\eta_c\} \neq\emptyset$. 
\medskip

\noindent (b)\quad Let $\cA=\{a_0,a_1,\ldots,a_{n-1}\}$, $n=|\cA|\geq
5$. Using clause (a) we may choose $\eta,\nu,\rho\in\cB$ such that 
\[a_0+a_1=\eta+\nu\quad \mbox{ and }\quad a_0+a_2=\eta+\rho.\]
Let $x=a_0+\eta$. We will argue that $a_i+x\in\cB$ for all $i<n$. Clearly by
our choices this holds for $i\leq 2$. Suppose $2<i<n$ is such that
$a_i+x\notin\cB$. Let $\eta^*,\eta^+\in\cB$ be such that $a_0+a_i=
\eta^*+\eta^+$. By clause (a) and our assumption on $i$ we have $\nu,\rho\in
\{\eta^*,\eta^+\}$, so $a_0+a_i=\nu+\rho$. Let $j<n$ be such that $j\notin
\{0,1,2,i\}$, and let $a_0+a_j=\nu^*+\nu^+$, $\nu^*,\nu^+\in\cB$. Then 
\[|\{\nu^*,\nu^+\}\cap \{\eta,\nu\}|=|\{\nu^*,\nu^+\}\cap \{\eta,\rho\}| =
|\{\nu^*,\nu^+\}\cap \{\nu,\rho\}|=1,\]
a contradiction. The uniqueness of $x$ follows from the linear independence
of $\cB$.   
\end{proof}

\begin{lemma}
\label{contread}
Suppose that $P\subseteq \can$ is a perfect set and $A_n\subseteq
P\times P$ are $\Sigma^1_1$ sets (for $n<\omega$) such that $P\times P
= \bigcup\limits_{n<\omega} A_n\cup \{(x,x):x\in P\}$. Then there is a
perfect set $P^*\subseteq P$ with the following property. 
\begin{enumerate}
\item[$(\heartsuit)$] For some increasing sequence of integers 
  $0=n_0<n_1<n_2<n_3<\ldots$, for each $k<\omega$ and any distinct
  $x,y\in P^*$ we have  
  \begin{enumerate}
  \item If $x\rest n_{k+1}\neq y\rest n_{k+1}$, then for all $x',y'\in
    P^*$
\[\big(x\rest n_{k+1}=x'\rest n_{k+1}\ \wedge\ y\rest n_{k+1}=y'\rest
n_{k+1}\big)\ \Rightarrow \big((x,y)\in A_k\ \Leftrightarrow\
(x',y')\in  A_k\big),\]  
\item the set $\{z\rest n_{k+1}: z\in P^*\ \wedge \ z\rest n_k=x\rest
  n_k\}$ has exactly two elements.  
  \end{enumerate}
\end{enumerate}
\end{lemma}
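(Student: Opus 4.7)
The plan is to construct $P^*$ as $[T^*]$, where $T^* = \bigcap_k T_k$ is the fusion limit of a decreasing sequence of perfect subtrees $T_0 \supseteq T_1 \supseteq \ldots$ of $T_P := \{x\rest n : x\in P,\ n<\omega\}$, built together with integers $0 = n_0 < n_1 < n_2 < \ldots$. The invariants are: $S_k := T_k \cap {}^{n_k} 2$ has exactly $2^k$ elements, every node of $T_k$ of length $\ge n_k$ extends a unique element of $S_k$, and $T_{k+1}$ coincides with $T_k$ on nodes of length $\le n_k$. Clause $(\heartsuit)(2)$ is then built into the invariants.

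The real work is the passage from stage $k$ to stage $k+1$, where we will have homogenized $A_k$. For each $\sigma \in S_k$ use the perfectness of $T_k$ to pick incomparable extensions $\sigma_0,\sigma_1 \in T_k$ and shrink the subtrees above them to perfect subtrees; doing this for all $\sigma \in S_k$ yields a family $\cC$ of $2^{k+1}$ pairwise disjoint perfect cones. Enumerate the $\binom{2^{k+1}}{2}$ unordered pairs of distinct cones in $\cC$; process them one at a time, refining only the two cones of the current pair (to perfect subcones, leaving the others untouched) so that after processing, the product of the remaining elements of that pair is either entirely contained in $A_k$ or entirely disjoint from $A_k$. The refinement step is enabled by the following sub-lemma.

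\emph{Sub-lemma.} If $A \subseteq \can \times \can$ is $\Sigma^1_1$ and $Q_0, Q_1 \subseteq \can$ are disjoint perfect sets, then there exist perfect $Q_0^* \subseteq Q_0$ and $Q_1^* \subseteq Q_1$ with either $Q_0^* \times Q_1^* \subseteq A$ or $(Q_0^* \times Q_1^*) \cap A = \emptyset$. This sub-lemma uses that $\Sigma^1_1$ sets have the Baire property: $A \cap (Q_0 \times Q_1)$ is either meager in $Q_0 \times Q_1$ or comeager in some basic relatively open rectangle $U \times V$. In the meager case, write $A \cap (Q_0 \times Q_1) = \bigcup_n N_n$ as a union of nowhere dense sets and apply a Mycielski-style diagonal construction to get perfect $Q_0^* \subseteq Q_0$ and $Q_1^* \subseteq Q_1$ with $Q_0^* \times Q_1^*$ disjoint from every $N_n$. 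In the comeager case, the symmetric Mycielski construction applied to $(U \times V) \setminus A$ puts $Q_0^* \times Q_1^*$ inside $A$. After all pairs have been processed, the refined family determines $T_{k+1}$; we pick $n_{k+1}$ large enough that all $2^{k+1}$ cones are separated at level $n_{k+1}$ and set $S_{k+1} = T_{k+1} \cap {}^{n_{k+1}} 2$.

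With this construction in hand, clause $(\heartsuit)(1)$ follows: for distinct $x,y \in P^*$ with $x\rest n_{k+1} \neq y\rest n_{k+1}$, the pair $(x,y)$ lies in the product of two distinct cones fixed at stage $k+1$, and by our homogenization, membership in $A_k$ depends only on which pair of cones contains it, i.e., only on $(x\rest n_{k+1}, y\rest n_{k+1})$. The main obstacle is the sub-lemma; once it is in place the fusion bookkeeping is routine. An alternative route would use Sacks-forcing continuous reading of names for the $\Sigma^1_1$ relations together with Shoenfield absoluteness, but the direct Mycielski--Baire-property argument above is more elementary and entirely adequate here.
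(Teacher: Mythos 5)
Your proposal is correct and takes a genuinely different route from the paper. The paper proceeds in a single sweep: for each $A_n$ it defines dense families $J_n\cup K_n$ of pairs $(\sigma,\rho)$ of incompatible conditions, a global meager set $X$, and meager relations $R^n_{\sigma,\rho}\subseteq P^4$ and $R_n\subseteq P^2$ that encode failures of local homogeneity and failures of being captured by some rectangle, and then invokes Mycielski's theorem on topological independence to obtain one perfect $P'$ avoiding the whole array of relations simultaneously; the sequence $\langle n_k\rangle$ and $P^*$ are extracted afterwards. You instead build $P^*$ directly by a fusion $T_0\supseteq T_1\supseteq\cdots$, handling $A_k$ alone at stage $k\to k+1$ by splitting into $2^{k+1}$ disjoint cones and pairwise homogenizing via the two-rectangle sub-lemma ``for $A$ $\Sigma^1_1$ and $Q_0,Q_1$ disjoint perfect sets there are perfect $Q_0^*\subseteq Q_0$, $Q_1^*\subseteq Q_1$ with $Q_0^*\times Q_1^*$ either inside $A$ or disjoint from $A$.'' Both approaches ultimately rest on the same descriptive-set-theoretic input (the Baire property of $\Sigma^1_1$ sets and the fact that meager sets can be dodged by shrinking to perfect subsets), but yours avoids invoking the general Mycielski theorem and replaces it with a much smaller, self-contained ingredient, at the cost of more explicit bookkeeping; the paper's version gets the independence in one blow but has to set up the correct family of auxiliary relations rather carefully. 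One small imprecision to fix: since $A_k$ need not be symmetric and $(\heartsuit)(1)$ concerns ordered pairs $(x,y)$, you should process ordered pairs of cones (or homogenize both $C\times D$ and $D\times C$ at each unordered-pair step); as written, ``the product of the remaining elements of that pair'' is ambiguous about orientation. This is a cosmetic adjustment and does not affect the soundness of the construction.
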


\begin{proof}
We will use the general result of Mycielski on the existence of
independent sets in topological algebras. To be able to quote his
theorem we have to introduce some definitions.

We say that a set $S\subseteq P^m$ is obtained by identification of
variables from $R\subseteq P^{m+1}$ if for some $i,j\leq m$ we have 
\[(x_1,\ldots,x_m)\in S\ \Leftrightarrow\ (x_1,\ldots,x_i,x_j,
x_{i+1},\ldots, x_m)\in R.\]

For $n<\omega$ let $J_n$ consist of all pairs
$(\sigma,\rho)\in {}^{\omega>} 2\times {}^{\omega>} 2$ such that the
set $([\sigma]\times[\rho])\cap A_n$ is meager (in $P^2$) and
$[\sigma]\cap [\rho]=\emptyset$, and let $K_n$ consist of pairs
$(\sigma,\rho)\in {}^{\omega>} 2\times {}^{\omega>} 2$ for which
$([\sigma]\times[\rho])\setminus A_n$ is meager and
$[\sigma]\cap [\rho]=\emptyset$. For $(\sigma,\rho)\in J_n$ we may fix
a Borel meager (in $P$) set $X_{\sigma,\rho}\subseteq P$ such that
\[\begin{array}{l}
(\forall x\in [\sigma]\setminus X_{\sigma,\rho})( (A_n)_x\cap 
[\rho]\mbox{ is meager})\mbox{ and}\\
(\forall y\in [\rho]\setminus X_{\sigma,\rho})( (A_n)^y\cap 
[\sigma]\mbox{ is meager}).
\end{array}\] 
Similarly, if $(\sigma,\rho)\in K_n$ then a Borel meager set
$X_{\sigma,\rho}\subseteq P$ is such that  
\[\begin{array}{l}
(\forall x\in [\sigma]\setminus X_{\sigma,\rho})( [\rho] \setminus
(A_n)_x \mbox{ is meager})\mbox{ and}\\
(\forall y\in [\rho]\setminus X_{\sigma,\rho})( [\sigma] \setminus
(A_n)^y \mbox{ is meager}).
\end{array}\] 
For $(\sigma,\rho)\in J_n\cup K_n$ let
\[\begin{array}{lr}
R^n_{\sigma,\rho}=\big\{(x_1,x_2,y_1,y_2)\in P^4:&
x_1,x_2\in [\sigma]\setminus X_{\sigma,\rho}\ \wedge \ y_1, y_2 \in
                    [\rho]\setminus X_{\sigma,\rho}\ \wedge \ \quad\\
            &(x_1,y_1)\in A_n\ \wedge \ (x_2,y_2)\notin A_n\ \big\}. 
\end{array}\]  
Clearly for every $(\sigma,\rho)\in J_n\cup K_n$ the set
$R^n_{\sigma,\rho}$ is meager (in $P^4$), moreover if $S\subseteq P^k$
is obtained from $R^n_{\sigma,\rho}$ by repeated identification and/or
permutation of variables, then $S$ is meager in $P^k$ as well. 

The sets $A_n$ have the Baire property and hence the sets $J_n\cup
K_n$ are dense in  ${}^{\omega>} 2\times {}^{\omega>} 2$.  Let 
\[X=\bigcup\big\{X_{\sigma,\rho}: (\sigma,\rho)\in J_n\cup K_n\ \wedge \
n<\omega\big\}\] 
and
\[R_n=\big\{(x,y)\in P^2:  x\neq y \mbox{ and for all }
(\sigma,\rho)\in J_n\cup K_n\mbox{ we have }\ x\notin[\sigma]\ \lor\
y\notin  [\rho]\big\}.\] 
Easily, $X$ is a meager subset opf $P$, each $R_n$ is meager in $P^2$ and
identification of variables in $R_n$ leads to an empty set (so meager). 

By \cite[Theorem 1, p. 141]{My64} there is a perfect set $P'\subseteq
P$ such that  
\begin{itemize}
\item $(P'\times P'\times P'\times P')\cap R^n_{\sigma,\rho} =
  \emptyset$ for all $n<\omega$ and $(\sigma,\rho)\in J_n\cup K_n$,
  and 
\item $P'\cap X=\emptyset$ and $(P'\times P')\cap R_n=\emptyset$ for all
  $n<\omega$.  
\end{itemize}
Clearly, if $x\neq y$ are from $P'$ and $n<\omega$, then for some 
  $N<\omega$ we have 
\[\big(\forall x',y'\in P'\big)\big((x\rest N=x'\rest N\ \wedge\ y\rest 
N=y'\rest N)\ \Rightarrow ((x,y)\in A_n\ \Leftrightarrow\  (x',y')\in 
A_n)\big).\] 
By shrinking the perfect $P'$ one can construct a perfect set
$P^*\subseteq P'$ and an increasing sequence $0=n_0<n_1<n_2<\ldots$
such that the demands in $(\heartsuit)$ are satisfied. 
\end{proof}

\section{The main result}

\begin{theorem}
\label{mainthm}
  There exists a ccc forcing notion $\bbP$ such that in $\bV^\bbP$, there is
  a $\Sigma^0_2$ set $B\subseteq \can$ with the properties that 
  \begin{enumerate}
\item[$(\spadesuit)$]  
\begin{enumerate}
  \item for some sequence $\langle h_\alpha:\alpha<\omega_1\rangle$ of
    pairwise distinct elements of $\can$ we have $|(h_\alpha+B)\cap
    (h_\beta+B)|\geq 4$, but 
\item in each perfect set $P\subseteq\can$ there are $f,g\in P$ with
  $|(f+B)\cap (g+B)|\leq 2$.
  \end{enumerate}
  \end{enumerate}
\end{theorem}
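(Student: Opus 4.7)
I would construct $\bbP$ as a ccc forcing of finite approximations in the spirit of Shelah's \cite{Sh:522}. A condition $p\in\bbP$ would specify a working level $\ell^p\in\omega$, a finite support $u^p\subseteq\omega_1$, initial segments $h^p_\alpha\in{}^{\ell^p}2$ for $\alpha\in u^p$, and a finite tree $T^p\subseteq{}^{\leq\ell^p}2$ whose top level $T^p\cap{}^{\ell^p}2$ is linearly independent over $\{0,1\}$; this linear-independence condition is the crucial norm controlling the forcing. The tree $T^p$ records approximations to closed slices $F_n$ of the intended $\Sigma^0_2$ set $B=\bigcup_n F_n$. In addition, for each pair $\alpha<\beta$ in $u^p$, the condition commits to four distinct pairs $(\sigma^i,\tau^i)\in(T^p\cap{}^{\ell^p}2)^2$ with $\sigma^i+\tau^i=h^p_\alpha+h^p_\beta$, serving as witnesses to $|(h_\alpha+B)\cap(h_\beta+B)|\geq 4$. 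The order extends all of this data coherently and preserves the recorded witnesses.

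For ccc I would use a standard $\Delta$-system argument on $\langle p_\xi:\xi<\omega_1\rangle$, thinning to an uncountable subfamily with a common working level $\ell^*$, common root $u^*$, common restricted data, and uniform combinatorial type. Amalgamation of two such conditions glues their non-root parts via an isomorphism fixing the common level, and linear independence at the top is preserved because newly combined vectors differ from existing ones on coordinates lying above the common level. Standard density arguments then yield, in the extension, reals $h_\alpha\in\can$ for $\alpha<\omega_1$ and a $\Sigma^0_2$ set $B$ realizing clause $(\spadesuit)(a)$.

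For clause $(\spadesuit)(b)$, assume toward contradiction that some perfect $P\in\bV^\bbP$ satisfies $|(x+B)\cap(y+B)|\geq 4$ for all distinct $x,y\in P$. The Borel sets $A_n=\{(x,y)\in P^2:x\neq y\text{ and four witnesses to the intersection lie in }F_n\}$ cover $P^2\setminus\{(x,x):x\in P\}$. Apply Lemma \ref{contread} to obtain a perfect $P^*\subseteq P$ with continuous reading along some $0=n_0<n_1<n_2<\cdots$. The trace $\cA_{k+1}=\{z\rest n_{k+1}:z\in P^*\}$ has size $2^{k+1}$ by $(\heartsuit)(2)$. Continuous reading of $A_k$, combined with the closedness of $F_k$, localizes the four intersection-witnesses at level $n_{k+1}$, giving $\cA_{k+1}+\cA_{k+1}\subseteq\cB_{k+1}+\cB_{k+1}$, where $\cB_{k+1}$ is the level-$n_{k+1}$ trace of the generic tree approximation to $B$. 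The linear-independence norm keeps $\cB_{k+1}$ linearly independent, so whenever $k\geq 2$ (making $|\cA_{k+1}|\geq 5$), Lemma \ref{litlem}(b) produces a unique $x_{k+1}\in{}^{n_{k+1}}2$ with $\cA_{k+1}+x_{k+1}\subseteq\cB_{k+1}$. Uniqueness across levels forces the $x_{k+1}$'s to cohere, so they converge to a single $x^*\in\can$ with $P^*+x^*\subseteq B$.

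The contradiction is then that the generic $B$, by design of the norm, cannot contain a perfect subset---each $F_n$ is forced to be thin (countable or, with the simplest choice of norm, entirely concentrated on finitely many reals per condition), so the $F_\sigma$ union $B$ cannot swallow the perfect translate $P^*+x^*$. The main obstacle, and the technical heart of the paper, is calibrating the norm to balance (i) ccc $\Delta$-system amalgamation and the dense addition of four witnesses for every pair $\alpha<\beta<\omega_1$, and (ii) the preservation of ``no perfect subset of $B$.'' Lemma \ref{get4} enters if the argument first extracts a 4-arrangement-plus-one homogeneous subset of $\cA_{k+1}$ rather than using all of it, giving a Ramsey-style reduction before the algebraic conclusion of Lemma \ref{litlem}(b).
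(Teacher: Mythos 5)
Your overall architecture for the forcing and the ccc argument matches the paper, and your invocation of Lemma \ref{contread} is on target. But the proof of clause $(\spadesuit)$(b) has a genuine gap, and the way you propose to close the argument cannot work.

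Your proposed contradiction --- that the set $B$ ``cannot contain a perfect subset'' because the closed pieces $F_n$ are forced to be thin --- is simply false. The whole point of clause $(\spadesuit)$(a) is that $B$ contains all $\aleph_1$ reals $h_\alpha$, so $B$ is an uncountable $\Sigma^0_2$ set and hence \emph{does} contain a perfect set. No forcing can make an uncountable Borel set avoid perfect subsets. So concluding from $P^*+x^*\subseteq B$ that one has a contradiction is hopeless; the existence of such an $x^*$ is not in itself problematic. Relatedly, the intermediate step $\cA_{k+1}+\cA_{k+1}\subseteq\cB_{k+1}+\cB_{k+1}$ with Lemma \ref{litlem}(b) applied to $\cB_{k+1}$ as the trace of the generic tree is also off: the trace of the generic trees is \emph{not} linearly independent (it contains sums like $\eta_\alpha+\rho_{\alpha,\beta}$, which are linear combinations of the genuinely independent vectors), so Lemma \ref{litlem}(b) does not apply as you state it.

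The paper's actual mechanism is different and is the technical heart you are missing. Conditions carry two extra labeling functions, $K$ (coloring each $\alpha\in u$) and $\ell$ (coloring each pair), constrained by clause (6): for $\alpha<\beta<\gamma$ in $u$, $\{K_\alpha,K_\gamma,\ell_{\alpha,\gamma}\}\neq\{K_\beta,K_\gamma,\ell_{\beta,\gamma}\}$. The proof of (b) never tries to embed $P^*$ into $B$. Instead it defines a $2$-coloring $h$ on a finite set of decided nodes $\sigma_i$ using Claim \ref{cl6} (which classifies how the four intersection-witnesses can look in terms of $\eta_\alpha,\eta_\beta,\rho_{\alpha,\beta}$), shows that $h$ has no $0$-homogeneous triangle, invokes Lemma \ref{get4} to produce a $1$-homogeneous set of size $\geq 5$ \emph{containing a $4$-arrangement}, and then uses Lemma \ref{litlem}(b) on the genuinely independent set $\{\eta^p_\alpha:\alpha\in u^p\}$ to translate that $4$-arrangement into three indices $\alpha<\beta<\gamma$. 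The continuous-reading condition $(\boxtimes)_{13}$ then forces the equality $\{K^p_\alpha,K^p_\gamma,\ell^p_{\alpha,\gamma}\}=\{K^p_\beta,K^p_\gamma,\ell^p_{\beta,\gamma}\}$, directly contradicting clause (6). So Lemma \ref{get4} is not an optional refinement as you suggest but is essential for producing the $4$-arrangement that drives the final contradiction, and the $K,\ell$ labels --- absent from your proposal --- are the structural device that makes the contradiction possible.
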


\begin{proof}
{\bf A condition\/} $p\in\bbP$ is a tuple 
\[p=\langle u,n,\bar{\eta},m_*, \bar{t},\mu,K\rangle = \langle
u^p,n^p,\bar{\eta}^p,m_*^p, \bar{t}^p,\mu^p,K^p\rangle\]
satisfying the following demands.
\begin{enumerate}
\item $\emptyset\neq u\in [\omega_1]^{<\omega}$, $0<m_*,n<\omega$, and
  $\bar{\eta}=\langle \eta_\alpha:\alpha\in u\rangle\subseteq {}^n 2$. 
\item $\bar{t}=\langle t_m:m<m_*\rangle$, each $t_m\subseteq {}^{n\geq}2$ is
  a tree with all maximal nodes of length $n$.
\item $\mu:[u]^2\longrightarrow {}^n2 \times m_*$, and if $\alpha\neq \beta$
  are from $u$ then we will write $\mu(\alpha,\beta)= \mu(\beta,\alpha)=
  (\rho_{\alpha,\beta}, \ell_{\alpha,\beta})$.  
\item If $\alpha\neq\beta$ are from $u$ then both 
  $\eta_\alpha+\rho_{\alpha,\beta}\in t_{\ell_{\alpha,\beta}}$ and
  $\eta_\beta+\rho_{\alpha,\beta}\in t_{\ell_{\alpha,\beta}}$. 
\item $K:u\longrightarrow m_*:\alpha\mapsto K_\alpha$ and $\eta_\alpha\in 
  t_{K_\alpha}$.
\item If $\alpha<\beta<\gamma$ are from $u$, then
  $\{K_\alpha,K_\gamma,\ell_{\alpha,\gamma}\} \neq \{K_\beta, K_\gamma,
  \ell_{\beta,\gamma}\}$.   
\item If $m<m'<m_*$ then $t_m\cap t_{m'}\cap {}^n 2=\emptyset$.
\item If $m<m_*$ then $t_m\cap {}^n 2\subseteq
  \{\eta_\alpha+\rho_{\alpha,\beta}:\alpha\neq \beta\ \wedge \alpha,\beta\in
  u\}\cup \{\eta_\alpha:\alpha\in u\}$.  
\item $\langle\eta_\alpha:\alpha\in u\rangle\conc\langle 
  \rho_{\alpha,\beta}: \alpha<\beta\ \wedge \alpha,\beta\in u\rangle$ is a 
  list of linearly independent vectors (in $({}^n 2,+,\cdot)$
  over $(2,+_2,\cdot_2)$); in particular they are pairwise distinct.   
\end{enumerate}

{\bf The order\/} $\leq_\bbP=\leq$ of $\bbP$ is defined by:\\
$p\leq q$ if and only if the following conditions are satisfied.
\begin{enumerate}
\item[(i)] $u^p\subseteq u^q$, $n^p\leq n^q$ and $m_*^p\leq m_*^q$. 
\item[(ii)] If $\alpha\in u^p$ then $\eta^q_\alpha\rest
  n^p=\eta^p_\alpha$. 
\item[(iii)] If $m<m_*^p$ then $t^q_m\cap {}^{n^p} 2= t^p_m\cap {}^{n^p}
  2$. 
\item[(iv)] If $\alpha\in u^p$ then $K_\alpha^p=K_\alpha^q$ and if
  $\alpha\neq \beta$ are from $u^p$, then
  $\ell_{\alpha,\beta}^p=\ell_{\alpha,\beta}^q$ and  $\rho_{\alpha,\beta}^p
  \trianglelefteq \rho_{\alpha,\beta}^q$.
\end{enumerate}

\begin{claim}
\label{cl0}
$(\bbP,\leq)$ is a partial order of size $\omega_1$.
\end{claim}

\begin{claim}
\label{cl6}
If $p\in \bbP$ and $b_0,c_0,b_1,c_1\in \bigcup\limits_{m<m^p_*} (t^p_m\cap
{}^{n^p} 2)$ are pairwise distinct and satisfy $b_0+c_0=b_1+c_1$, then for
some $\alpha\neq \beta$ from $u^p$ we have 
\[\{b_0,c_0,b_1,c_1\}=\{\eta^p_\alpha,\eta^p_\beta,
\eta^p_\alpha+\rho^p_{\alpha,\beta} , \eta^p_\beta+\rho^p_{\alpha,\beta}
\}.\]  
Also, for some $i<2$, $\{b_i,c_i\}=\{\eta^p_\alpha,\eta^p_\beta\}$ or 
$\{b_i,c_i\}=\{\eta^p_\alpha+\rho^p_{\alpha,\beta},\eta^p_\beta\}$ or 
$\{b_i,c_i\}=\{\eta^p_\alpha+\rho^p_{\alpha,\beta},\eta^p_\alpha\}$. 
\end{claim}

\begin{proof}[Proof of the Claim]
It follows from the definition of $\bbP$ (clause (8)) that $ b_0,c_0,b_1,c_1\in
\{\eta_\alpha, \eta_\alpha+\rho_{\alpha,\beta}: \alpha\neq\beta \mbox{ are from
} u^p\}$. Since, by clause (9), $\langle\eta^p_\alpha:\alpha\in
u\rangle\conc\langle \rho_{\alpha,\beta}: \alpha<\beta\ \wedge
\alpha,\beta\in u\rangle$ are linearly independent we easily get our
conclusion. 
\end{proof}

\begin{claim}
\label{cl1}
For every $N,M<\omega$ and $\delta<\omega_1$ the set 
\[Z^{N,M}_\delta=\{p\in\bbP: n^p\geq N \ \wedge \ m_*^p\geq M\ \wedge \
\delta\in u^p\}\] 
is open dense in $\bbP$.   
\end{claim}

\begin{proof}[Proof of the Claim]
Suppose that $p\in \bbP$ and let $\alpha\in \omega_1\setminus u^p$. 

Let $\langle \alpha_0,\ldots,\alpha_k\rangle$ be the increasing enumeration
of $u^p$. Set $u=u^p\cup\{\alpha\}$, $n=n^p+k+2$, and $m_*=m_*^p+k+2$. For 
$i\leq k$ let  
\[\eta_{\alpha_i}=\eta_{\alpha_i}^p\conc \langle \underbrace{
  0,\ldots,0}_{k+2} \rangle\quad\mbox{ and }\quad \rho_{\alpha_i,\alpha} =
\rho_{\alpha,\alpha_i} = \langle \underbrace{ 0,\ldots,0}_{n^p+i+1}
\rangle\conc \langle 1\rangle \conc \langle
\underbrace{0,\ldots,0}_{k-i}\rangle.\]  
We also let $\eta_\alpha=
\langle \underbrace{0,\ldots,0}\limits_{n^p} \rangle\conc \langle 1\rangle 
\conc \langle \underbrace{0,\ldots,0}\limits_{k+1} \rangle$ and we put
$\ell_{\alpha_i,\alpha}=\ell_{\alpha,\alpha_i}=m^p_*+i$ and
$K_\alpha=m^p_*+k+1$. Next, for $i\leq k$ we define
$K_{\alpha_i}=K_{\alpha_i}^p$ and for $i<j\leq k$ we let
$\rho_{\alpha_j,\alpha_i}=\rho_{\alpha_i,\alpha_j}=\rho_{\alpha_i,\alpha_j}^p\conc
\langle \underbrace{0,\ldots,0}_{k+2} \rangle $,  and
$\ell_{\alpha_j,\alpha_i}=\ell_{\alpha_i,\alpha_j}=
\ell_{\alpha_i,\alpha_j}^p$.  (So a function $\mu:[u]^2\longrightarrow {}^n2  
\times m_*$ is defined now too.) For $m<m_*^p$ let  
\[t_m=t_m^p\cup\{\sigma \conc \langle \underbrace{
  0,\ldots,0}\limits_{j} \rangle:\sigma\in t^p_m\cap 
  {}^{n^p} 2\ \wedge\ j<k+3\}\]
and for $m=m_*^p+i<m_*-1$ let $t_m=\{(\eta_{\alpha_i}+
\rho_{\alpha_i,\alpha}) \rest j, (\eta_\alpha+\rho_{\alpha_i,\alpha}) \rest
:j\leq n\}$ and $t_{m_*-1}=\{\eta_\alpha \rest j: j\leq n\}$. Finally, let
$\bar{t}=\langle t_m:m<m_*\rangle$.  

It is straightforward to verify that $q=\langle u,n,\bar{\eta},m_*,\bar{t},
\mu, K\rangle$ satisfies the demands of the definition of a condition in
$\bbP$. Moreover, $q$ is a condition stronger than $p$, and $\alpha\in
u^q$, $m^q_*>m_*^p+2$ and $n^q>n^p+2$.   

Now the Claim readily follows. 
\end{proof}

\begin{claim}
\label{cl2}
The forcing notion $\bbP$ has the Knaster property.  
\end{claim}

\begin{proof}[Proof of the Claim]
Suppose that $\langle p_\xi:\xi<\omega_1\rangle$ is a sequence of
conditions from $\bbP$. Applying the standard $\Delta$--lemma based
cleaning procedure we may find an uncountable set $A\subseteq
\omega_1$ such that $\{u^{p_\xi}:\xi\in A\}$ forms a $\Delta$--system of
finite sets and for $\xi<\zeta$ from $A$ we have:
\begin{enumerate}
\item[$(*)_1$]  $n^{p_\xi}=n^{p_\zeta}$, $m_*^{p_\xi}=m_*^{p_\zeta}$,
  $\bar{t}^{p_\xi} =\bar{t}^{p_\zeta}$,
\item[$(*)_2$] $|u^{p_\xi}|=|u^{p_\zeta}|$, $u^{p_\xi}\cap
  u^{p_\zeta}$ is an initial segment of both $u^{p_\xi}$ and
  $u^{p_\zeta}$ and $\max(u^{p_\xi} \setminus u^{p_\zeta})<
  \min(u^{p_\zeta} \setminus u^{p_\xi})$,
\item[$(*)_3$] if $\pi:u^{p_\xi}\longrightarrow u^{p_\zeta}$ is the
  order preserving bijection then for every $\alpha\in u^{p_\xi}$ we
  have 
\[K^{p_\xi}_\alpha=K^{p_\zeta}_{\pi(\alpha)}\quad \mbox{ and }\quad
\eta^{p_\xi}_\alpha=\eta^{p_\zeta}_{\pi(\alpha)}\]
and $\mu^{p_\xi}(\alpha,\beta)=\mu^{p_\zeta}(\pi(\alpha),\pi(\beta))$
for all $\alpha<\beta$ from $u^{p_\xi}$.
\end{enumerate}
We may assume that $u^{p_\xi}\cap u^{p_\zeta} \neq \emptyset \neq u^{p_\xi}  
\setminus u^{p_\zeta}$ for distinct $\xi,\zeta\in A$.

We will argue that if $\xi<\zeta$ are from $A$, then the conditions
$p_\xi,p_\zeta$ are compatible. 

Let 
\begin{enumerate}
\item[$(*)_4$] $\langle \gamma_0,\ldots,\gamma_{k_0}\rangle$ be the
  increasing enumeration of $u^{p_\xi}\cap u^{p_\zeta}$, $\langle 
  \alpha_0,\ldots,\alpha_{k_1}\rangle$ be the increasing  enumeration  of
  $u^{p_\xi}\setminus u^{p_\zeta}$ and $\langle
  \beta_0,\ldots,\beta_{k_1}\rangle$ be the increasing   enumeration  of
  $u^{p_\zeta}\setminus u^{p_\xi}$;  
\item[$(*)_5$] $k^*=(k_1+1)(k_0+k_1+3)+ \frac{(k_1-1)(k_1+2)}{2} +1$,   
  $n=n^{p_\xi}+k^*$,  and $m_*=m_*^{p_\xi}+(k_1+1)^2$;  
\item[$(*)_6$] for  $i< k^*$ let  $\nu_i=\langle
  \underbrace{0,\ldots,0}_i \rangle\conc \langle 1\rangle \conc \langle  
\underbrace{0,\ldots,0}_{k^*-i-1}\rangle\in {}^{k^*} 2$; 
\item[$(*)_7$]  $u=u^{p_\xi}\cup u^{p_\zeta}=\{\alpha_i:i\leq k_1\}\cup
  \{\beta_i:i\leq k_1\}\cup\{\gamma_i:i\leq k_0\}$; 
\item[$(*)_8$] for $i\leq k_1$ let
  $\eta_{\alpha_i}=\eta_{\alpha_i}^{p_\xi}\conc \langle \underbrace{
    0,\ldots,0}_{n-n^{p_\xi}}\rangle$, $\eta_{\beta_i}=
  \eta_{\beta_i}^{p_\zeta}\conc \nu_i$ and for $i\leq k_0$ let
  $\eta_{\gamma_i}=\eta_{\gamma_i}^{p_\xi}\conc \langle \underbrace{  
    0,\ldots,0}_{n-n^{p_\xi}} \rangle$; 
\item[$(*)_9$] $K_{\alpha_i}= K^{p_\xi}_{\alpha_i}$, $K_{\beta_i}=
  K^{p_\zeta}_{\beta_i}$ (for $i\leq k_1$) and $K_{\gamma_i}=
  K^{p_\xi}_{\gamma_i}$ (for $i\leq k_0$); 
\item[$(*)_{10}$] if $\delta<\vare$ are from $u^{p_\xi}$ then
  $\rho_{\delta,\vare} =\rho^{p_\xi}_{\delta,\vare}\conc \langle 
\underbrace{0,\ldots,0}\limits_{n-n^{p_\xi}} \rangle$ and
$\ell_{\delta,\vare}= \ell^{p_\xi}_{\delta,\vare}$; 
\item[$(*)_{11}$] if $i\leq k_0$ and $j\leq k_1$ then 
$\rho_{\gamma_i,\beta_j}= \rho^{p_\zeta}_{\gamma_i,\beta_j} \conc \nu_k$,
where $k=(k_1+1)+i(k_1+1)+j$, and $\ell_{\gamma_i,\beta_j}=
\ell^{p_\zeta}_{\gamma_i,\beta_j}$; 
\item[$(*)_{12}$] if $i,j\leq k_1$ then 
$\rho_{\alpha_i,\beta_j}= \langle
\underbrace{0,\ldots,0}\limits_{n^{p_\zeta}} \rangle\conc \nu_k$, where $k=
(k_0+2)(k_1+1)+i(k_1+1)+j$, and $\ell_{\alpha_i,\beta_j}=
m^{p_\zeta}_*+i(k_1+1)+j$; 
\item[$(*)_{13}$] if $i<j\leq k_1$ then $\rho_{\beta_i,\beta_j}=
  \rho_{\beta_i,\beta_j}^{p_\zeta} \conc \nu_k$, where 
$k=(k_1+1)(k_0+k_1+3)+\frac{i(2k_1-i+1)}{2}+(j-i-1)$, and
$\ell_{\beta_i,\beta_j}= \ell_{\beta_i,\beta_j}^{p_\zeta}$; 
\item[$(*)_{14}$] if $m<m^{p_\zeta}_*$ then 
\[\begin{array}{ll}
t_m=&t_m^{p_\zeta} \cup \{\sigma
  \conc \langle \underbrace{0,\ldots,0}\limits_k \rangle:\sigma\in
  t^p_m\cap {}^{n^{p_\zeta}} 2\ \wedge\ k\leq k^*\}\ \cup \\
&\{\eta_{\beta_i}\rest k: i\leq k_1\ \wedge\ K_{\beta_i}=m\ \wedge\
k\leq n\}\ \cup\\ 
&\{(\eta_{\gamma_i}+\rho_{\gamma_i,\beta_j})\rest k: i\leq k_0\ \wedge\
j\leq k_1\ \wedge\ \ell_{\gamma_i,\beta_j}=m\ \wedge\ k\leq n\}\ \cup \\
&\{(\eta_{\beta_j}+\rho_{\gamma_i,\beta_j})\rest k: i\leq k_0\ \wedge\ j\leq
k_1\ \wedge\ \ell_{\gamma_i,\beta_j}=m\ \wedge\ k\leq n\}\ \cup \\
&\{(\eta_{\beta_i}+\rho_{\beta_i,\beta_j})\rest k: i<j\leq k_1\
\wedge\ \ell_{\beta_i,\beta_j}=m\ \wedge\ k\leq n\}\ \cup \\
&\{(\eta_{\beta_j}+\rho_{\beta_i,\beta_j})\rest k: i<j\leq k_1\
\wedge\ \ell_{\beta_i,\beta_j}=m\ \wedge\ k\leq n\}
\end{array}\]
and
\item[$(*)_{15}$]  for  $m=m_*^{p_\zeta}+i(k_1+1)+j<m_*$, $i,j\leq k_1$, we
  let  
\[t_m=\{(\eta_{\alpha_i}{+}\rho_{\alpha_i,\beta_j})\rest k,
  (\eta_{\beta_j}{+}\rho_{\alpha_i,\beta_j})\rest k:k\leq n\}.\]
\end{enumerate}
Clauses $(*)_{10}$--$(*)_{13}$ define also  a function
$\mu:[u]^2\longrightarrow {}^n2 \times m_*$.  Finally, let  $\bar{t}=\langle
t_m:m<m_*\rangle$.    

One easily verifies that $q=\langle u,n,\bar{\eta},m_*,\bar{t},
\mu, K\rangle$ satisfies the demands of the definition of a
condition in $\bbP$ and that this condition is a common upper bound of
$p_\zeta$ and $p_\xi$.
\end{proof}

We define $\bbP$--names $\name{h}_\alpha$ (for $\alpha<\omega_1$),
$\name{T}_m$ (for $m<\omega$) and $\name{r}_{\alpha,\beta}$ (for
$\alpha<\beta<\omega_1$) by
\begin{itemize}
\item $\forces_{\bbP}$`` $\name{h}_\alpha=\bigcup\{\eta^p_\alpha: p\in 
  \name{G}_\bbP\ \wedge \ \alpha\in u^p\}$ '',
\item $\forces_{\bbP} $`` $\name{T}_m=\bigcup\{t^p_m: p\in 
  \name{G}_\bbP\ \wedge \ m<m^p_*\}$ '',
\item $\forces_{\bbP} $`` $\name{r}_{\alpha,\beta}=
  \bigcup\{\rho^p_{\alpha,\beta}: p\in \name{G}_\bbP\ \wedge \
  \alpha,\beta\in u^p\}$ ''. 
\end{itemize}

\begin{claim}
\label{cl3}
For $\alpha<\beta<\omega_1$ and $m<\omega$ we have 
\begin{enumerate}
\item $\forces_\bbP$``
  $\name{h}_\alpha,\name{r}_{\alpha,\beta}\in\can$ '',
\item $\forces_{\bbP}$`` $\name{T}_m\subseteq {}^{\omega>}2$ is a tree with
  no maximal nodes ''. 
\item $\forces_{\bbP}$`` if $m<m'<\omega$ then $[\name{T}_m]\cap
  [T_{m'}]=\emptyset$ ''. 
\end{enumerate}
\end{claim}

\begin{proof}[Proof of the Claim]
  By Claim \ref{cl1} and the definition of the order of $\bbP$. 
\end{proof}

Let $\name{B}$ be the $\bbP$--name for the $\Sigma^0_2$ subset of $\can$
given by $\forces_\bbP$`` $\name{B}=\bigcup\limits_{m<\omega} [\name{T}_m]$
''. 

\begin{claim}
\label{cl4}
For each $\alpha<\beta<\omega_1$ we have
\[\forces_\bbP\mbox{`` } |(\name{h}_\alpha +B)\cap (\name{h}_\beta+B)|\geq
4\mbox{ ''.}\] 
\end{claim}

\begin{proof}[Proof of the Claim]
  It should be clear that $\name{h}_\alpha,\name{h}_\beta,
  \name{h}_\alpha+\name{r}_{\alpha,\beta}$ and
  $\name{h}_\beta+\name{r}_{\alpha,\beta}$ are forced to belong to
  $\name{B}$ and they all are pairwise distinct. Therefore, ${\bold 0},
  \name{r}_{\alpha,\beta}, \name{h}_\alpha
  +\name{h}_\beta$ and $\name{h}_\alpha
  +\name{h}_\beta+\name{r}_{\alpha,\beta}$ are distinct elements of the
  intersection $(\name{h}_\alpha +B)\cap (\name{h}_\beta+B)$. 
\end{proof}

\begin{claim}
\label{cl5}
\[\forces_\bbP\mbox{`` for every perfect set $P\subseteq\can$ there
  are $f,g\in P$ with }|(f+\name{B})\cap (g+\name{B})|<4 \mbox{ ''.}\]    
\end{claim}

\begin{proof}[Proof of the Claim]
Suppose $G\subseteq \bbP$ is generic over $\bV$ and let us work in
$\bV[G]$ for a while. Assume towards contradiction that $P\subseteq
\can$ is a perfect set such that 
\[(\forall f,g\in P)(|(f+\name{B}^G)\cap (g+\name{B}^G)|\geq 4.\] 
Then distinct for $f,g\in P$ there are $b_0,c_0,b_1,c_1\in B$ such that  
$\{b_0,c_0\}\cap \{b_1,c_1\}=\emptyset$ and $f+g=b_0+c_0=b_1+c_1$. 

Now, for $(\ell_0,m_0,\ell_1,m_1,N)\in\omega^5$ let 
\[\begin{array}{r}
A_{\ell_0,m_0,\ell_1,m_1}^N= \{(f,g)\in P^2:
\mbox{for some } b_i\in [\name{T}_{\ell_i}^G], c_i\in
[\name{T}_{m_i}^G] \mbox{ (for $i<2$) we have }\ \\
b_0\rest N\neq c_0\rest N\mbox{ and }\{b_0\rest N,c_0\rest N\}\cap
    \{b_1\rest N, c_1\rest N\}=\emptyset\ \\
\mbox{ and }f+g=b_0+c_0=b_1+c_1\}.  
\end{array}\] 
The sets $A^N_{\ell_0,m_0,\ell_1,m_1}$ are $\Sigma^1_1$, so we may use
Lemma \ref{contread} to choose a perfect set $P^*\subseteq P$ and an
increasing sequence $0=n_0<n_1<n_2<n_3<\ldots$ such that 
\begin{enumerate}
\item[$(\boxtimes)_1$] for each $k<\omega$ and any distinct
  $x,y\in P^*$ we have:  
  \begin{enumerate}
  \item if $x\rest n_{k+1}\neq y\rest n_{k+1}$,
    $\ell_0,m_0,\ell_1,m_1,N\leq k$ then for all $x',y'\in P^*$
    satisfying $x\rest  n_{k+1}=x'\rest n_{k+1}$ and $y\rest
    n_{k+1}=y'\rest  n_{k+1}$ we have 
\[(x,y)\in A_{\ell_0,m_0,\ell_1,m_1}^N\ \Leftrightarrow\ (x',y')\in
A_{\ell_0,m_0,\ell_1,m_1}^N,\]     
\item the set $\{z\rest n_{k+1}: z\in P^*\ \wedge \ z\rest n_k=x\rest
  n_k\}$ has exactly two elements.  
  \end{enumerate}
\end{enumerate}
By our assumption on $P$ we know that 
\begin{enumerate}
\item[$(\boxtimes)_2$] for each distinct $x,y\in P^*$ there are
  $\ell_0,m_0,\ell_1,m_1,N<\omega$ such that $(x,y)\in
  A^N_{\ell_0,m_0,\ell_1,m_1}$.  
\end{enumerate}
Therefore, by induction on $j\leq 21$ we may choose $0=k_0<k_1<k_2<\ldots<
k_j< \ldots<k_{20}<k_{21}$ and $A\subseteq P^*$ such that 
\begin{enumerate}
\item[$(\boxtimes)_3$] $|A|=2^{20}$, say $A=\{x_0,\ldots,x_{2^{20}-1}\}$,
\item[$(\boxtimes)_4$] if $j\leq 20$, $x,y\in A$ and $x\rest n_{k_j}\neq y\rest
  n_{k_j}$, then $(x,y)\in A_{\ell_0,m_0,\ell_1,m_1}^N$ for some
  $\ell_0,m_0,\ell_1,m_1,N <k_j$, 
\item[$(\boxtimes)_5$] if $j<20$ and $x\in A$, then there is $y\in A$ such that
  $x\rest n_{k_j} = y\rest n_{k_j}$ but $x\rest n_{k_{j+1}} \neq y\rest
  n_{k_{j+1}}$. 
\end{enumerate}
Let $\name{P}^*$, $\name{\bar{n}}$,
$\name{A}=\{\name{x}_0,\ldots,\name{x}_{2^{20}-1}\}$, and $\name{\bar{k}}$ be
$\bbP$--names for the objects appearing in $(\boxtimes)_1$--$(\boxtimes)_5$
  and let a condition $p\in G$ force they have the properties listed there. 

Passing to a stronger condition we may also demand that 
\begin{enumerate}
\item[$(\boxtimes)_6$] $p$ decides the values of 
  $\name{k}_0,\name{k}_1,\ldots, \name{k}_{21}$, say $p\forces 
  \name{k}_j=k_j$ for $j\leq 21$,
\item[$(\boxtimes)_7$] $p$ decides the values of 
  $\name{n}_0,\name{n}_1,\ldots, \name{n}_{k_{21}}$, say $p\forces 
  \name{n}_i=n_i$ for $i\leq k_{21}$,
\item[$(\boxtimes)_8$] $p$ decides the values of $\name{x}_0\rest
  n_{k_{21}}, \ldots, \name{x}_{2^{20}-1}\rest n_{k_{21}}$, say $p\forces
  \name{x}_i\rest n_{k_{21}}=\sigma^*_i$ for $i<2^{20}$,
\item[$(\boxtimes)_9$] $n^p>n_{k_{21}}$ and $m^p_*>k_{21}$. 
\end{enumerate}
Note that it follows from $(\boxtimes)_3+(\boxtimes)_5$ that 
\begin{enumerate}
\item[$(\boxtimes)_{10}$] if $i<j<2^{20}$, then $\sigma_i^*\neq
  \sigma_j^*$. 
\end{enumerate}
Since $p$ forces that $\name{x}_i$'s have the properties listed in
$(\boxtimes)_1$ and $(\boxtimes)_3$--$(\boxtimes)_5$, there are $\sigma_i\in
{}^{n^p}2$ (for $i<2^{20}$) such that 
\begin{enumerate}
\item[$(\boxtimes)_{11}$]  $\sigma^*_i\vtl \sigma_i$ for each $i<2^{20}$,
  and  
\item[$(\boxtimes)_{12}$]  if $i,j<2^{20}$ are distinct, then for some 
  $\ell_0(i,j), m_0(i,j), \ell_1(i,j), m_1(i,j)<m^p$ and $b_0(i,j)\in
  t^p_{\ell_0(i,j)}\cap {}^{n^p} 2$, $c_0(i,j)\in t^p_{m_0(i,j)}\cap
  {}^{n^p} 2$, $b_1(i,j)\in t^p_{\ell_1(i,j)} \cap {}^{n^p} 2$, $c_1(i,j)\in
  t^p_{m_1(i,j)} \cap {}^{n^p} 2$ we have
  \begin{enumerate}
\item $\sigma_i+\sigma_j=b_0(i,j)+c_0(i,j)=b_1(i,j)+c_1(i,j)$, and 
\item $\{b_0(i,j),c_0(i,j)\}\cap \{b_1(i,j),c_1(i,j)\}=\emptyset$,
  \end{enumerate}
\item[$(\boxtimes)_{13}$]  if $i,i',j,j'<2^{20}$ and $k<n^p$ are such that
  $\sigma_i\rest k=\sigma_{i'}\rest k\neq\sigma_j\rest k=\sigma_{j'}\rest
  k$, then 
\[  \{\ell_0(i,j), m_0(i,j), \ell_1(i,j), m_1(i,j)\} = \{\ell_0(i',j'),
m_0(i',j'), \ell_1(i',j'), m_1(i',j')\}.\] 
\end{enumerate}
It follows from $(\boxtimes)_{10}$--$(\boxtimes)_{12}$ that there are no
repetitions in the list $b_0(i,j)$, $c_0(i,j)$, $b_1(i,j)$, $c_1(i,j)$.  

By Claim \ref{cl6}, for distinct $i,j<2^{20}$ we can find $\ell<2$ and
distinct $\alpha,\beta$ from $u^p$ such that 
\begin{itemize}
\item either $\{b_\ell(i,j),c_\ell(i,j)\}=\{\eta^p_\alpha,\eta^p_\beta\}$ (in which case we
  set $h(i,j)=1$), 
\item or $\{b_\ell(i,j),c_\ell(i,j)\}=\{\eta^p_\alpha+\rho^p_{\alpha,\beta},
  \eta^p_\beta\}$ (and then we set $h(i,j)=0$), 
\item or $\{b_\ell(i,j),c_\ell(i,j)\}=\{\eta^p_\alpha+\rho^p_{\alpha,\beta},
  \eta^p_\alpha\}$ (and then we also set $h(i,j)=0$).  
\end{itemize}
Note that 
\begin{enumerate}
\item[$(\boxtimes)_{14}$] if $i,j,k<2^{20}$ are pairwise distinct, then
  $h(i,j)=1$ or $h(j,k)=1$ or $h(i,k)=1$. 
\end{enumerate}
Why? First suppose that for some $\alpha<\beta$, $\gamma<\delta$ and
$\vare<\zeta$ from $u^p$ we have 
\[\begin{array}{l}
\sigma_i+\sigma_j=b_0(i,j)+c_0(i,j)=\eta^p_\alpha+\eta_\beta^p+
\rho_{\alpha,\beta}^p ,\\
\sigma_j+\sigma_k=
b_0(j,k)+c_0(j,k)=\eta^p_\gamma+\eta_\delta^p+\rho_{\gamma,\delta}^p,\\
\sigma_i+\sigma_k= b_0(i,k)+c_0(i,k)=\eta^p_\vare+\eta_\zeta^p+
    \rho_{\vare,\zeta}^p.
\end{array}\]
Then 
\[0=(\eta^p_\alpha+\eta_\beta^p+
\rho_{\alpha,\beta}^p) +
(\eta^p_\gamma+\eta_\delta^p+\rho_{\gamma,\delta}^p) +
(\eta^p_\vare+\eta_\zeta^p+ \rho_{\vare,\zeta}^p).\]  
However, by the linear independence, it is not possible (the $\rho$'s
cannot be cancelled).

Second, suppose 
\[\begin{array}{l}
\sigma_i+\sigma_j=b_0(i,j)+c_0(i,j)=\eta^p_\alpha+\eta_\alpha^p+
\rho_{\alpha,\beta}^p =\rho^p_{\alpha,\beta},\\
\sigma_j+\sigma_k=
b_0(j,k)+c_0(j,k)=\eta^p_\gamma+\eta_\delta^p+\rho_{\gamma,\delta}^p,\\
\sigma_i+\sigma_k= b_0(i,k)+c_0(i,k)=\eta^p_\vare+\eta_\zeta^p+
    \rho_{\vare,\zeta}^p.
\end{array}\]
Then $\rho^p_{\alpha,\beta}=(\eta^p_\gamma+\eta_\delta^p+
\rho_{\gamma,\delta}^p) + (\eta^p_\vare+\eta_\zeta^p+
\rho_{\vare,\zeta}^p)$, and this is again not possible by the linear independence.

Thirdly, the assumption that $\sigma_i+\sigma_j=\rho^p_{\alpha,\beta}$,
$\sigma_j+\sigma_k= \rho^p_{\gamma,\delta}$ and $\sigma_i+\sigma_k=
\eta^p_\vare+\eta_\zeta^p+ \rho_{\vare,\zeta}^p$ leads to 
\[\rho^p_{\alpha,\beta}+\rho^p_{\gamma,\delta}=\eta^p_\vare+\eta_\zeta^p+
\rho_{\vare,\zeta}^p,\]
again clear contradiction.

Finally, the configuration $\sigma_i+\sigma_j=\rho^p_{\alpha,\beta}$,
$\sigma_j+\sigma_k= \rho^p_{\gamma,\delta}$ and $\sigma_i+\sigma_k=
\rho_{\vare,\zeta}^p$ is also impossible. 
\medskip

Using Lemma \ref{get4} we may find $A\subseteq \{i:i<2^{20}\}$ such
that 
\begin{enumerate}
\item[$(\boxtimes)_{15}$] 
\begin{enumerate}
\item$|A|\geq 5$,  and $\cA=\{\sigma_i:i\in A\}$ contains a 4--arrangement
  (see \ref{nice4}), and    
\item $A$ is $h$-homogeneous in color $1$, i.e., $h(i,j)=1$ for 
  $i<j$ from $A$
\end{enumerate}
\end{enumerate}
(remember $(\boxtimes)_3+(\boxtimes)_5$). Now, $(\boxtimes)_{15}$(b) implies
that 
\[\cA+\cA\subseteq \{\eta^p_\alpha:\alpha\in u^p\}+\{\eta^p_\alpha:
\alpha\in u^p\}.\] 
Hence, by Lemma \ref{litlem}(b), there is
$x\in {}^{n^p} 2$ such that $\cA+x\subseteq \{\eta^p_\alpha:\alpha\in 
u^p\}$. Since $\cA+x$ contains a $4$--arangement we may find
$\alpha<\beta<\gamma$ such that  $\eta^p_\alpha,\eta^p_\beta,\eta^p_\gamma
\in \cA+x$ and 
\[\begin{array}{r}
\min\{k<n^p:\eta^p_\alpha(k)\neq \eta^p_\gamma(k)\}=
\min\{k<n^p:\eta^p_\beta(k)\neq \eta^p_\gamma(k)\}<\\
\min\{k<n^p:\eta^p_\alpha(k)\neq  \eta^p_\beta(k)\}.
\end{array}\]
Now, $(\eta^p_\alpha+x), (\eta^p_\beta+x), (\eta^p_\gamma+x) \in \cA$ so let 
$i,j,k<2^{20}$ be such that $\eta^p_\alpha+x=\sigma_i,
\eta^p_\beta+x=\sigma_j$ and $\eta^p_\gamma+x=\sigma_k$. By
$(\boxtimes)_{13}$ we have  
\[  \{\ell_0(i,k), m_0(i,k), \ell_1(i,k), m_1(i,k)\} = \{\ell_0(j,k),
m_0(j,k), \ell_1(j,k), m_1(j,k)\}\] 
but this implies that 
\[\big\{K^p_\alpha,K^p_\gamma,\ell^p_{\alpha,\gamma}\big\} =
\big\{K^p_\beta,K^p_\gamma,\ell^p_{\beta,\gamma}\big\},\]
contradicting clause (6) of the definition of $\bbP$. 
\end{proof}
\end{proof}

For the completeness of the picture let us note that, consistently, there is no
Borel set satisfying \ref{mainthm}$(\spadesuit)$.

\begin{theorem}
\label{2coh}
Assume CH. Let $\bbC_{\omega_2}$ is the forcing notion adding $\omega_2$  
Cohen reals. Then in $\bV^{\bbC_{\omega_2}}$ the following holds: 
\begin{quotation}
If $B\subseteq \can$ is Borel , $\langle \eta_\alpha:\alpha<\omega_2 \rangle 
\subseteq \can$ and 
\[\big(\forall \alpha<\beta<\omega_2\big)\big(|(B+\eta_\alpha)\cap
(B+\eta_\beta)|\geq 4\big), \]
then there is a perfect set $P\subseteq \can$ such that 
\[\big(\forall x,y\in P\big)\big(|(B+x)\cap (B+y)|\geq 4 \big).\]
\end{quotation}
\end{theorem}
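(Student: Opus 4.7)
The plan is to reduce the problem to the analysis of a $\Sigma^1_1$ relation in an intermediate model of CH, via a $\Delta$-system cleaning of the names $\dot\eta_\alpha$, and then to extract the perfect set by a Baire category / Mycielski construction.

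First I would observe that in $\bV[G]$ (where $G$ is $\bbC_{\omega_2}$-generic over $\bV$), the Borel code $\phi$ of $B$ lies in $\bV_0 := \bV[G \rest A_0]$ for some countable $A_0 \subseteq \omega_2$, and since $\bV \models \mathrm{CH}$ and $\bbC_{A_0}$ is countable, $\bV_0 \models \mathrm{CH}$. Each $\eta_\alpha$ has a nice $\bbC_{\omega_2}$-name with countable support $B_\alpha \supseteq A_0$. By the $\Delta$-system lemma together with $\mathrm{CH}$ in $\bV$ (bounding the number of isomorphism types of countable nice names over $\bV_0$ by $\aleph_1$), I would thin to $I \subseteq \omega_2$ of size $\aleph_2$ so that $\{B_\alpha : \alpha \in I\}$ forms a $\Delta$-system with root $A_0$ and so that, via the order isomorphism of $B_\alpha \setminus A_0$ onto $\omega$, all the $\dot\eta_\alpha$'s are a single $\bbC_\omega$-name. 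This yields a Borel function $F : \can \to \can$ coded in $\bV_0$ with $\eta_\alpha = F(c_\alpha)$, where $c_\alpha$ is the Cohen generic on $B_\alpha \setminus A_0$; for distinct $\alpha, \beta \in I$ the pair $(c_\alpha,c_\beta)$ is mutually Cohen generic over $\bV_0$. After an additional thinning we may assume the $\eta_\alpha$ are pairwise distinct (the case of uncountably many repetitions is degenerate).

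Next, consider the $\Sigma^1_1$ relation $S := \{(c,d) \in \can \times \can : |(B+F(c)) \cap (B+F(d))| \geq 4\}$ coded in $\bV_0$. For every pair $\alpha \neq \beta$ in $I$, $(c_\alpha,c_\beta) \in S$ and is mutually Cohen generic over $\bV_0$; since $\Sigma^1_1$ sets have the Baire property and mutually generic pairs avoid every $\bV_0$-coded meager set, $S$ must be non-meager. A pigeonhole on the countably many ``witness conditions'' for the BP-decomposition of $S$ yields a single box $[\sigma] \times [\tau] \subseteq \can \times \can$ in which $S$ is comeager and into which $\aleph_2$-many pairs $(c_\alpha,c_\beta)$ fall (whence, by pigeonhole on the indices, $\aleph_2$-many $c_\alpha$ land in $[\sigma]$ and in $[\tau]$ respectively). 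Applying Mycielski's theorem to the meager set $M := ([\sigma] \times [\tau]) \cap (S^c \cup E)$, where $E := \{(c,d) : c \neq d,\ F(c) = F(d)\}$, yields (after symmetrization to take $\sigma = \tau$) a perfect set $Q \subseteq \can$ on which $F$ is injective and such that $(x,y) \in S$ for all distinct $x,y \in Q$. Restricting further to a perfect subset on which $F$ is continuous makes $P^* := F[Q]$ a perfect subset of $\can$ satisfying $|(B+u) \cap (B+v)| \geq 4$ for all distinct $u,v \in P^*$.

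The main obstacle is establishing the meagerness of $E$ in the relevant box so that Mycielski can legitimately be applied. The plan is to argue by contradiction: if $E$ were comeager in some sub-box $[\sigma'] \times [\tau']$, then every mutually Cohen generic pair in $[\sigma'] \times [\tau']$ would satisfy $F(c) = F(d)$. A pigeonhole plus tree-branching argument on the $\aleph_2$-many distinct $c_\alpha$'s --- the tree $\{\rho \in {}^{\omega>}2 : |\{\alpha \in I : \rho \vtl c_\alpha\}| = \aleph_2\}$ must contain a branching node (else the $c_\alpha$'s concentrate on a single real, contradicting their distinctness together with $|I|=\aleph_2$), and iterating this branching gives an abundance of ``wide'' nodes --- then produces two generic pairs in $[\sigma'] \times [\tau']$, yielding $\eta_\alpha = \eta_\beta$ and contradicting the distinctness of the $\eta_\alpha$'s.
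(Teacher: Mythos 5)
The paper does not actually prove Theorem~\ref{2coh}; it dismisses it as a ``straightforward argument'' and refers to Shelah's Fact~1.16 in \cite{Sh:522}. Your proposal follows exactly the reduction that remark has in mind (trim the nice names to a $\Delta$--system over an intermediate countable Cohen extension $\bV_0$ of the CH ground model, obtain a single name $\name{\eta}$ and $\aleph_2$--many pairwise mutually generic $c_\alpha$ with $\eta_\alpha=F(c_\alpha)$, and reduce to a symmetric $\Sigma^1_1$ relation $S$ coded in $\bV_0$ that contains all the pairs $(c_\alpha,c_\beta)$). That outline is correct, and the use of Mycielski is the right closing tool.

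However, there is a genuine gap concealed in the phrase ``after symmetrization to take $\sigma=\tau$.'' This is not a cosmetic step. If $\sigma$ and $\tau$ are incomparable, then $S$ being comeager in $[\sigma]\times[\tau]$ says nothing at all about the behaviour of $S$ on $[\sigma]^2$ or on $[\tau]^2$, and Mycielski's theorem produces a perfect set inside a single space from a meager relation on the \emph{square} of that space; it does not turn comeagerness in an off-diagonal box into a perfect clique. A toy illustration: the symmetric clopen relation $\{(c,d):c(0)\ne d(0)\}$ is comeager in $[\langle 0\rangle]\times[\langle 1\rangle]$ and contains mutually generic pairs, yet admits no perfect clique at all. (Of course this particular relation cannot host an $\aleph_2$--clique, which is exactly why your argument \emph{can} be repaired, but the repair is where the actual content lies.) The same issue re-appears in your treatment of $E$: the sub-box $[\sigma']\times[\tau']$ in which $E$ would be comeager need not meet the set of $c_\alpha$'s, so the alleged contradiction $\eta_\alpha=\eta_\beta$ need not be producible.

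The missing ingredient is to localize everything to the ``popular tree'' $T=\{\rho\in{}^{\omega>}2: |\{\alpha\in I:\rho\vtl c_\alpha\}|=\aleph_2\}$ and to run a fusion there, rather than a single box-plus-Mycielski step. One first checks that $T$ is a perfect tree (regularity of $\aleph_2$ rules out a cofinal chain). Then the key local claim is: for every $\rho_0,\rho_1\in T$ and every $N$, there are incomparable $\rho_0'\sqsupseteq\rho_0$ and $\rho_1'\sqsupseteq\rho_1$ in $T$ with $[\rho_0']\times[\rho_1']\subseteq U\setminus(M_0\cup\dots\cup M_N)$, where $U$ is a symmetric open set and $M=\bigcup_n M_n$ a meager set, both coded in $\bV_0$, with $S\bigtriangleup U\subseteq M$ (and similarly one works in the open approximation of $E^c$). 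The proof of this claim is the pigeonhole you already have in mind: the $\aleph_2$--many pairs $(c_\alpha,c_\beta)$ with $c_\alpha\in[\rho_0]$, $c_\beta\in[\rho_1]$ all avoid $M_0\cup\dots\cup M_N$ and lie in $U$, hence lie in some basic box contained in $U\setminus(M_0\cup\dots\cup M_N)$; countably many boxes and $\aleph_2$ pairs force one box to carry $\aleph_2$ many, and then both projections of that box are in $T$. Iterating this claim along a standard fusion (at stage $n$ one has $2^n$ pairwise incomparable nodes of $T$; split each using the claim, threading in the meager pieces of $M$ and of the approximation of $E$ as $n$ grows) produces a perfect tree $T'\subseteq T$ with $[T']^2\setminus\Delta\subseteq S\setminus E$. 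Composing with $F$ (made continuous on a comeager set in the usual way) then yields the required perfect $P^*=F[[T']]$. This fusion is the ``straightforward argument'' the paper alludes to; your write-up names the right tools but does not assemble them in a way that closes the off-diagonal gap.
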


\begin{proof}
Straightforward argument; it also follows from Shelah \cite[Fact
1.16]{Sh:522}. See more in Ros{\l}anowski and Shelah \cite{Sh:F1444}. 
\end{proof}


\end{document}